\newif\ifpdf
\newtheorem{theorem}{Theorem}
\newtheorem{corollary}[theorem]{Corollary}
\newtheorem{lemma}[theorem]{Lemma}
\newtheorem{problem}[theorem]{Problem}
\def\vertex(#1){\put(#1){\circle*{1.8}}}
\def\lab(#1)#2{\put(#1){\makebox(0,0)[c]{#2}}}
\newenvironment{proof}[1][Proof]{\textbf{#1.} }{\ \rule{0.5em}{0.5em}}
\begin{document}

\title{Results on the intersection graphs of subspaces of a vector space}
\author{N. Jafari Rad and S. H. Jafari\\[3ex]
{\normalsize Department of Mathematics,} \\
{\normalsize Shahrood University of Technology,}\\
{\normalsize Shahrood, Iran}\\
{\normalsize n.jafarirad@shahroodut.ac.ir}}
\date{}
\maketitle

\begin{abstract}
For a vector space $V$ the \emph{intersection graph of subspaces}
of $V$, denoted by $G(V )$, is the graph whose vertices are in a
one-to-one correspondence with proper nontrivial subspaces of $V$
and two distinct vertices are adjacent if and only if the
corresponding subspaces of $V$ have a nontrivial (nonzero)
intersection. In this paper, we study the clique number, the
chromatic number, the domination number and the independence
number of the intersection graphs of subspaces of a vector space.
\end{abstract}

\textbf{Keywords:} Vector space, Subspace, Dimension,
Intersection graph, Graph, Domination, Clique, Independence,
Matching.

\section{ Introduction}
For graph theory terminology in general we follow ~\cite{w}.
Specifically, let $G =(V, E)$ be a graph with vertex set $V$ of
order~$n$ and edge set $E$. If $S$ is a subset of $V(G)$, then we
denote by $G[S]$ the subgraph of $G$ induced by $S$. A set of
vertices $S$ in $G$ is a \emph{dominating set}, if $N[S]=V(G)$.
The \textit{domination number}, $\gamma (G)$, of $G$ is the
minimum cardinality of a dominating set of $G$. A set of vertices
$S$ in $G$ is an \emph{independent set}, if $G[S]$ has no edge.
The \textit{independence number}, $\alpha (G)$, of $G$ is the
maximum cardinality of an independent set of $G$. The
\textit{clique number} of a graph $G$, written $w(G)$, is the
maximum size of a set of pair-wise adjacent vertices of $G$. A
function $f$ defined on $V(G)$ is a proper \emph{vertex coloring}
if $f(v)\neq f(v)$ for any pair of adjacent vertices $u,v$. The
(vertex) \emph{chromatic number}, $\chi(G)$, of $G$ if the
minimum $k$ such that there is a proper vertex coloring $f$ on
$G$ with $|f(V(G))|=k$.\\

Let $F=\{S_i:i\in I\}$ be an arbitrary family of sets. The
\textit{intersection graph} $G(F)$ is the one-dimensional skeleton
of the nerve of $F$ , i.e., $G(F)$ is the graph whose vertices are
$S_i$, $i\in I$ and in which the vertices $S_i$ and $S_j$
($i,j\in I$) are adjacent if and only if $S_i\neq S_j$ and
$S_i\cap S_j\neq \emptyset$ \cite{s-m}.\\

The study of algebraic structures using the properties of graphs
has become an exciting research topic in the last few decades
years, leading to many fascinating results and questions. It is
interesting to study the intersection graphs $G(F)$ when the
members of $F$ have an algebraic structure. For references of
intersection graphs of algebraic structures see for example
\cite{b,cgms,cp,z}.\\

Intersection graphs of subspaces of a vector space are studied by
Jafari Rad and Jafari in \cite{jj0,jj}. For a vector space $V$
the \emph{intersection graph of subspaces} of $V$, denoted by $G(V
)$, is the graph whose vertices are in a one-to-one
correspondence with proper nontrivial subspaces of $V$ and two
distinct vertices are adjacent if and only if the corresponding
subspaces of $V$ have a nontrivial (nonzero) intersection.
Clearly the set of vertices is empty if $dim(V ) = 1$. Jafari Rad
and Jafari characterized all vector spaces whose intersection
graphs are connected, bipartite, complete, Eulerian, or planar.\\

In this paper, we continue the study of the intersection graph of
subspaces of a vector space. We study the chromatic number, the
clique number, the domination number, and the independence number
in the intersection graph of subspaces of a vector space.
Throughout this paper $V$ is a vector space with $\dim(V)=n$ on a
finite field $F$ with $|F|=q$. We also denote by $0$ the zero
subspace of a vector space.

\section{Known results}

Let $F$ be a finite field with $|F|=q$, and let $V$ be an
$n$-dimensional vector space over $F$. For integer $t\in
\{1,2,...,n\}$, the number of $t$-dimensional subspaces of $V$ is
given in \cite{frankl} by $\bigg[
\begin{array}{c}
n \\
t
\end{array}
\bigg]_{q} =\prod_{0 \leq i <t} \frac{q^{n-i}-1}{q^{t-i}-1}.$ We
suppose that $\bigg[
\begin{array}{c}
n \\
0
\end{array}
\bigg]_{q}=1$ and $\bigg[
\begin{array}{c}
n \\
t
\end{array}
\bigg]_{q}=0$ if $t\not \in \{0,1,2,...,n\}$. Note that $\bigg[
\begin{array}{c}
n \\
t
\end{array}
\bigg]_{q}=\bigg[
\begin{array}{c}
n \\
n-t
\end{array}
\bigg]_{q}$ for any $t\in\{0,1,...,n\}$.

\begin{lemma}[Jafari Rad and Jafari \cite{jj}]\label{hh}
If $\dim(W)=m$, then $$|\{W^{\prime}:\dim(W^{\prime})=t, W\cap
W^{\prime}=0\}|=q^{mt}\bigg[
\begin{array}{c}
n-m \\
t
\end{array}
\bigg]_{q}.$$
\end{lemma}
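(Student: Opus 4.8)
The plan is to count the desired subspaces by first counting ordered tuples of vectors and then dividing out the overcounting. First I would count the number $N$ of ordered $t$-tuples $(v_1,\dots,v_t)$ of vectors of $V$ satisfying $v_i\notin W+\langle v_1,\dots,v_{i-1}\rangle$ for each $i\in\{1,\dots,t\}$. Since the subspace $W+\langle v_1,\dots,v_{i-1}\rangle$ has dimension exactly $m+i-1$ at the $i$-th step, there are $q^n-q^{m+i-1}$ admissible choices for $v_i$, giving
\[
N=\prod_{i=0}^{t-1}\bigl(q^n-q^{m+i}\bigr).
\]

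The key step is to show that these tuples are exactly the ordered bases of the $t$-dimensional subspaces $W'$ with $W\cap W'=0$, and this equivalence is the part that requires the most care. I would argue in two directions. If $(v_1,\dots,v_t)$ satisfies the stepwise condition, then it is linearly independent and $W':=\langle v_1,\dots,v_t\rangle$ has dimension $t$; moreover $\dim(W+W')=m+t$, so $\dim(W\cap W')=m+t-(m+t)=0$. Conversely, if $W\cap W'=0$ and $(v_1,\dots,v_t)$ is any ordered basis of $W'$, then a relation $v_i=w+\sum_{j<i}c_jv_j$ with $w\in W$ would force $w=v_i-\sum_{j<i}c_jv_j\in W\cap W'=0$, contradicting independence; hence $v_i\notin W+\langle v_1,\dots,v_{i-1}\rangle$. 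Thus each such $W'$ is counted in $N$ exactly once for every one of its ordered bases.

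Finally I would divide by the number of ordered bases of a fixed $t$-dimensional space, namely $\prod_{i=0}^{t-1}(q^t-q^i)$, to obtain
\[
|\{W':\dim(W')=t,\ W\cap W'=0\}|=\frac{\prod_{i=0}^{t-1}(q^n-q^{m+i})}{\prod_{i=0}^{t-1}(q^t-q^i)}.
\]
Factoring $q^{m+i}$ and $q^{i}$ respectively out of each factor in the numerator and denominator leaves the front factor $q^{\sum_{i=0}^{t-1}(m+i)-\sum_{i=0}^{t-1}i}=q^{mt}$, while the remaining quotient $\prod_{i=0}^{t-1}(q^{n-m-i}-1)/(q^{t-i}-1)$ is exactly $\bigg[\begin{array}{c} n-m \\ t\end{array}\bigg]_{q}$. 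This yields the claimed value $q^{mt}\bigg[\begin{array}{c} n-m \\ t\end{array}\bigg]_{q}$, and I would note that it automatically gives $0$ when $t>n-m$, matching the stated convention for the Gaussian coefficient.
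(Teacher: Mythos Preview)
Your argument is correct. Note, however, that the paper does not supply its own proof of this lemma: it is quoted as a known result from the earlier reference of Jafari Rad and Jafari, so there is no in-paper proof to compare against. Your counting of ordered $t$-tuples avoiding the growing flag $W+\langle v_1,\dots,v_{i-1}\rangle$, the bijection with ordered bases of the desired subspaces $W'$, and the division by $\prod_{i=0}^{t-1}(q^t-q^i)$ are all standard and carried out cleanly; the final factorization into $q^{mt}$ times the Gaussian binomial is exactly right, and your remark on the case $t>n-m$ is also correct.
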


\begin{theorem}[Jafari Rad and Jafari \cite{jj}]\label{}
If $\dim(W)=m$, then $$\deg(W)=\sum_{t=0}^{n} \bigg[
\begin{array}{c}
n \\
t
\end{array}
\bigg]_{q}-\sum_{t=0}^{n-m} q^{mt}\bigg[
\begin{array}{c}
n-m \\
t
\end{array}
\bigg]_{q} -2.$$
\end{theorem}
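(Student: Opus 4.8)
The plan is to compute $\deg(W)$ by complementary counting: rather than counting the neighbours of $W$ directly, I will count the vertices that are \emph{not} adjacent to $W$ and subtract from the total number of vertices. First I would record that the vertices of $G(V)$ are precisely the subspaces $W'$ with $1 \le \dim(W') \le n-1$, so the total number of vertices equals $\sum_{t=1}^{n-1}\bigg[\begin{array}{c} n \\ t\end{array}\bigg]_{q}$.

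Among the remaining vertices distinct from $W$ — there are $\sum_{t=1}^{n-1}\bigg[\begin{array}{c} n \\ t\end{array}\bigg]_{q} - 1$ of them — a vertex $W'$ fails to be adjacent to $W$ exactly when $W\cap W'=0$. Since $W$ is nontrivial, the relation $W\cap W=W\neq 0$ shows that $W$ itself is never one of these non-neighbours, so subtracting the non-neighbour count from the quantity above yields exactly $\deg(W)$.

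Next I would count the non-neighbours dimension by dimension. For a fixed $t$, the number of $t$-dimensional subspaces $W'$ with $W\cap W'=0$ is $q^{mt}\bigg[\begin{array}{c} n-m \\ t\end{array}\bigg]_{q}$ by Lemma \ref{hh}. Any subspace meeting $W$ trivially satisfies $\dim(W+W')=m+t\le n$, so the count vanishes for $t>n-m$, consistent with the stated convention that the Gaussian coefficient is zero there. Hence the number of non-neighbours is $\sum_{t=1}^{n-m} q^{mt}\bigg[\begin{array}{c} n-m \\ t\end{array}\bigg]_{q}$, and therefore
$$\deg(W) = \sum_{t=1}^{n-1}\bigg[\begin{array}{c} n \\ t\end{array}\bigg]_{q} - 1 - \sum_{t=1}^{n-m} q^{mt}\bigg[\begin{array}{c} n-m \\ t\end{array}\bigg]_{q}.$$

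Finally I would re-index the two sums to absorb the boundary terms $t=0$ and $t=n$. Since $\bigg[\begin{array}{c} n \\ 0\end{array}\bigg]_{q}=\bigg[\begin{array}{c} n \\ n\end{array}\bigg]_{q}=1$, adjoining these two terms shows the first sum equals $\sum_{t=0}^{n}\bigg[\begin{array}{c} n \\ t\end{array}\bigg]_{q}-2$; likewise the $t=0$ term of the second sum is $q^{0}\bigg[\begin{array}{c} n-m \\ 0\end{array}\bigg]_{q}=1$, so that sum equals $\sum_{t=0}^{n-m} q^{mt}\bigg[\begin{array}{c} n-m \\ t\end{array}\bigg]_{q}-1$. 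Substituting both identities and collecting constants as $-2-1+1=-2$ yields the stated formula. No step presents a genuine obstacle; the only point requiring care is the bookkeeping of the excluded boundary subspaces — the zero subspace and $V$ itself are not vertices, and $V$ can never meet the nontrivial $W$ trivially — so that every subspace is counted with the correct multiplicity.
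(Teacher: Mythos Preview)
Your argument is correct. The paper itself does not supply a proof of this theorem --- it is quoted in the ``Known results'' section as a result of \cite{jj} --- so there is no in-paper proof to compare against; your complementary-counting derivation from Lemma~\ref{hh}, together with the boundary bookkeeping for $t=0$ and $t=n$, is exactly the natural route and is carried out cleanly.
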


\begin{theorem}[Jafari Rad and Jafari \cite{jj}]\label{j1}
Let $V$ be a vector space. Then $G(V)$ is connected if and only
if $\dim(V)\geq 3$.
\end{theorem}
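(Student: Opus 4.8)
The plan is to prove the two directions separately, with the forward (``if'') direction carrying the real content and resting on the single observation that hyperplanes act as near-universal connectors in $G(V)$.

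For the ``only if'' direction I would verify the contrapositive by inspecting the small cases. If $\dim(V)=1$ there are no proper nontrivial subspaces, so $G(V)$ has no vertices. If $\dim(V)=2$, the vertices are exactly the $1$-dimensional subspaces, of which there are $q+1\ge 3$; any two distinct lines $L_1\ne L_2$ span all of $V$, so $L_1\cap L_2=0$ and the graph is edgeless on at least two vertices, hence disconnected. This shows that connectedness forces $\dim(V)\ge 3$.

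For the ``if'' direction, assume $n=\dim(V)\ge 3$. The engine is the dimension inequality $\dim(A\cap B)\ge \dim(A)+\dim(B)-n$ for subspaces $A,B$ of $V$. I would first record three facts. First, any two hyperplanes $H_1,H_2$ satisfy $\dim(H_1\cap H_2)\ge (n-1)+(n-1)-n=n-2\ge 1$, so all hyperplanes are pairwise adjacent and form a clique in $G(V)$. Second, any subspace $W$ with $\dim(W)\ge 2$ and any hyperplane $H\ne W$ satisfy $\dim(W\cap H)\ge \dim(W)-1\ge 1$, so $W$ is adjacent to every hyperplane. Third, any $1$-dimensional subspace $L$ is contained in some hyperplane $H$ (extend a spanning vector to a basis of $V$), and since $\dim(L)=1\ne n-1$ we have $L\ne H$ and $L\cap H=L\ne 0$, so $L$ is adjacent to $H$.

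Combining these three facts, every vertex of $G(V)$ either is a hyperplane or is adjacent to at least one hyperplane, while the hyperplanes themselves form a connected clique; routing any two prescribed subspaces into this clique and out again then exhibits a walk between them, proving that $G(V)$ is connected. The only step requiring care is recognizing exactly where $n\ge 3$ is used: it is what makes $n-2\ge 1$ in the first fact, so that distinct hyperplanes genuinely meet, and equivalently what prevents two lines from exhausting the space as they did in the decisive $\dim(V)=2$ failure. I expect the main (modest) obstacle to be the clean handling of the $1$-dimensional vertices, since such a line $L$ is adjacent \emph{only} to the subspaces containing it and so must be connected through a hyperplane rather than directly.
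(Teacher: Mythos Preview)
Your argument is correct. The dimension inequality $\dim(A\cap B)\ge \dim(A)+\dim(B)-n$ does all the work in the forward direction, and your three facts about hyperplanes cleanly connect every vertex to the hyperplane clique; the backward direction is handled by the explicit small cases. One cosmetic point: the $\dim(V)=1$ case yields the empty graph, so whether it counts as ``not connected'' is a matter of convention---you may wish to state explicitly which convention you adopt, or simply note that the paper already flags this case separately.

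As for comparison with the paper: there is nothing to compare. The present paper states this theorem only as a known result, citing the authors' earlier manuscript~\cite{jj}, and does not reproduce a proof here. Your self-contained argument via the hyperplane clique is the standard and natural one, and would serve perfectly well as the missing proof.
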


We next state some known results of graph theory.

\begin{theorem}[Hall's Marriage Theorem]\label{g1}
For $k>0$, every $k$-regular bipartite graph has a perfect
matching.
\end{theorem}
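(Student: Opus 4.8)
The plan is to reduce the statement to the set-theoretic (marriage-condition) form of Hall's theorem and to verify that condition by a double-counting argument that exploits $k$-regularity. Let $G$ be a $k$-regular bipartite graph with bipartition $(X,Y)$. First I would record that $|X|=|Y|$: counting the edges of $G$ from the $X$-side gives $|E(G)|=k|X|$, and counting from the $Y$-side gives $|E(G)|=k|Y|$, so $k|X|=k|Y|$, whence $|X|=|Y|$ since $k>0$.

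Next I would verify Hall's condition for $X$, namely that $|N(S)|\geq|S|$ for every $S\subseteq X$, where $N(S)$ is the set of vertices in $Y$ adjacent to some vertex of $S$. Fix $S\subseteq X$ and let $E_S$ be the set of edges having their $X$-endpoint in $S$. Since $G$ is bipartite, each edge has a unique $X$-endpoint, so the edges at the distinct vertices of $S$ are disjoint and $|E_S|=k|S|$ by $k$-regularity. By definition of $N(S)$, the $Y$-endpoint of every edge of $E_S$ lies in $N(S)$; hence $E_S$ is contained in the set of edges incident with $N(S)$, and that set has size $k|N(S)|$ (again each edge has a unique $Y$-endpoint). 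Therefore $k|S|=|E_S|\leq k|N(S)|$, and dividing by $k>0$ yields $|S|\leq|N(S)|$, which is exactly Hall's condition.

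Having verified the condition, I would invoke Hall's theorem to obtain a matching $M$ saturating $X$. Since $|X|=|Y|$, a matching saturating $X$ covers $|X|=|Y|$ vertices of $Y$ and so saturates $Y$ as well; thus $M$ is a perfect matching, as required.

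The main obstacle is really a question of what may be assumed: the regularity computation above is short and robust, but it only reduces the problem to Hall's marriage condition, which is itself the substantive theorem. If a fully self-contained argument is wanted, the genuine work is to prove the marriage-condition form of Hall's theorem — for instance by induction on $|X|$, splitting into the case where every nonempty proper $S\subsetneq X$ satisfies the strict inequality $|N(S)|>|S|$ (so a single edge extends to the required matching) and the case where some such $S$ is \emph{tight}, i.e.\ $|N(S)|=|S|$ (so one applies the inductive hypothesis separately to $S$ and to its complement). That induction, rather than the regularity count, is where the care is needed.
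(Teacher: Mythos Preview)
Your argument is correct and is the standard reduction: the double-counting step verifying Hall's condition from $k$-regularity is clean, the equality $|X|=|Y|$ is correctly derived, and your remark that the substantive content lies in the marriage-condition form of Hall's theorem (together with the sketch of the tight-set induction) is accurate.

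As for comparison with the paper: there is nothing to compare. The paper lists this theorem in its section of known results and offers no proof; it is simply quoted as background and then invoked later (in the proof of Theorem~\ref{gh}) to obtain a perfect matching in a certain $q^{t(n-t)}$-regular bipartite graph. So you have supplied strictly more than the paper does. Your closing caveat is also well taken: if one wants the argument to be self-contained, the inductive proof of the marriage-condition theorem is where the real work sits, and your outline of the two cases (strict inequality everywhere versus a tight set) is the right skeleton.
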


\begin{lemma}[\cite{w}]\label{g2}
For every graph $G$, $\chi(G)\geq w(G)$.
\end{lemma}

\begin{theorem}[Brooks, \cite{br}]\label{g3}
If $G$ is a connected graph other than a complete graph or an odd
cycle, then $\chi(G)\leq \Delta(G)$.
\end{theorem}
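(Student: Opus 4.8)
The plan is to prove the bound by producing, for each such $G$, a proper coloring with at most $\Delta := \Delta(G)$ colors via a greedy (sequential) coloring along a carefully chosen vertex ordering. The guiding principle is this: if we list the vertices as $v_1, v_2, \ldots, v_n$ and color them one at a time from the palette $\{1, \ldots, \Delta\}$, always giving $v_i$ the least color unused on its already-colored neighbors, then $v_i$ is successfully colored whenever fewer than $\Delta$ of its neighbors precede it. Hence it suffices to build an ordering in which (i) every $v_i$ with $i < n$ has at least one neighbor $v_j$ with $j > i$, so that at most $\deg(v_i) - 1 \le \Delta - 1$ of its neighbors are colored before it; and (ii) the final vertex $v_n$ has two neighbors forced to share a color, so that its $\Delta$ neighbors occupy at most $\Delta - 1$ colors. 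Both conditions together guarantee a free color at every step.

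First I would dispose of the low-degree cases. If $\Delta \le 2$, then since $G$ is connected it is a path or a cycle; excluding the complete graphs $K_1, K_2$ and the odd cycles leaves only paths on at least three vertices and even cycles, all satisfying $\chi(G) = 2 = \Delta$. So assume $\Delta \ge 3$. Next I treat the non-regular case: pick a vertex $r$ with $\deg(r) < \Delta$, take a spanning tree of $G$ rooted at $r$, and order the vertices so that each vertex precedes its parent, with $r = v_n$ last. Then (i) holds automatically since the parent of $v_i$ is a later neighbor, and $v_n = r$ has at most $\Delta - 1$ neighbors in all, so the greedy coloring succeeds with $\Delta$ colors.

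It remains to handle $G$ regular of degree $\Delta \ge 3$. If $G$ has a cut vertex $x$, I would split $G$ into the subgraphs induced by $x$ together with each component of $G - x$; in each piece $x$ has degree strictly below $\Delta$, so the non-regular argument colors each piece with $\Delta$ colors, and after permuting colors to agree on $x$ these colorings glue into a single proper coloring of $G$. Thus I may assume $G$ is $2$-connected. The goal now is to find three vertices $v_1, v_2, v_n$ with $v_1, v_2 \in N(v_n)$, $v_1 \not\sim v_2$, and $G - \{v_1, v_2\}$ connected: coloring $v_1$ and $v_2$ first with one common color (legitimate since they are non-adjacent), then ordering $v_3, \ldots, v_n$ by non-increasing distance to $v_n$ in the connected graph $G - \{v_1, v_2\}$ to secure (i), and finishing at $v_n$, yields the desired coloring via (ii). When $G$ is $3$-connected this triple is easy: as $G$ is connected and not complete, three consecutive vertices on a shortest path between a non-adjacent pair give $v_1, v_n, v_2$ with $v_1 \not\sim v_2$, and $3$-connectivity keeps $G - \{v_1, v_2\}$ connected.

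The hard part, and the crux of the whole argument, is the remaining case where $G$ is $2$-connected but possesses a $2$-cut $\{a, b\}$. Simply taking $v_n = a$ and picking neighbors $v_1, v_2$ of $a$ in two distinct components of $G - \{a, b\}$ makes them non-adjacent, but deleting them may fragment those components, so $G - \{v_1, v_2\}$ need not stay connected. The obstacle is to choose the pair so that connectivity is preserved; I would select $v_1, v_2$ as neighbors of $a$ in distinct components of $G - \{a, b\}$ that are additionally non-cut vertices of their respective pieces (equivalently, so that $G - v_1$ remains connected and $v_2$ is not a cut vertex of $G - v_1$), using $2$-connectivity to ensure every component of $G - \{a, b\}$ attaches to both $a$ and $b$. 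Establishing that such a choice always exists and that it indeed leaves $G - \{v_1, v_2\}$ connected is the delicate step on which the theorem rests.
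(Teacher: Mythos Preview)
The paper does not prove Brooks' theorem: it lists the result in the ``Known results'' section with a citation to \cite{br} and uses it as a black box in Theorem~\ref{4.3}. There is therefore no proof in the paper to compare your proposal against.

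On its own merits, your outline is the standard greedy-coloring proof usually attributed to Lov\'asz. The reductions (small $\Delta$, non-regular, cut vertex, $3$-connected) are handled correctly. In the cut-vertex step you should note explicitly why neither piece can be a $K_{\Delta+1}$ or an odd cycle (you observe $x$ has degree below $\Delta$ in each piece, which rules out $K_{\Delta+1}$; since every other vertex of a piece keeps its full degree $\Delta\ge 3$, odd cycles are excluded too), but this is a minor addition.

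The one place where your write-up is not yet a proof is exactly where you flag it: the $2$-connected, non-$3$-connected, $\Delta$-regular case. Your plan---take a $2$-cut $\{a,b\}$, set $v_n=a$, and choose $v_1,v_2$ as neighbours of $a$ in distinct components of $G-\{a,b\}$ that are ``non-cut in their pieces''---is the right instinct, but as stated it can fail: $a$ may have a \emph{single} neighbour in some component $C_i$, and that neighbour may well be a cut vertex of the subgraph on $C_i\cup\{a,b\}$, so the desired $v_1$ need not exist in that component. The usual fix is not to insist on this particular $v_n$: one shows more generally that a $2$-connected graph which is not a clique always contains some vertex $v_n$ with two non-adjacent neighbours $v_1,v_2$ such that $G-\{v_1,v_2\}$ is connected (for instance by taking $v_1$ to be any vertex whose removal leaves $G-v_1$ still $2$-connected, when such a vertex exists, or by a short block-tree argument when it does not). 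Filling in this lemma would complete your proof.
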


\section{New results}

\begin{theorem}\label{gh}
If $n$ is odd, then
$$w(G(V))=\sum_{i=1}^{\lfloor\frac{n}{2}\rfloor} \bigg[
\begin{array}{c}
n \\
i
\end{array}
\bigg]_{q}$$
\end{theorem}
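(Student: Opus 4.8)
The plan is to establish the identity by two matching bounds, exploiting the self-complementary symmetry $\binom{n}{t}_q = \binom{n}{n-t}_q$ to rewrite the target sum as the number of subspaces of dimension at least $(n+1)/2$. Since $n$ is odd we have $\lfloor n/2\rfloor = (n-1)/2$, and substituting $j = n-i$ gives $\sum_{i=(n+1)/2}^{n-1}\binom{n}{i}_q = \sum_{j=1}^{(n-1)/2}\binom{n}{j}_q$, so the claimed value is exactly the count of ``large'' subspaces.

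For the lower bound I would exhibit an explicit clique. Let $\mathcal{C}$ be the family of all subspaces $W$ with $\dim W \ge (n+1)/2$. For any two members $U,W$ of $\mathcal{C}$ we have $\dim(U\cap W) = \dim U + \dim W - \dim(U+W) \ge (n+1)/2 + (n+1)/2 - n = 1$, so $U\cap W \neq 0$ and $U,W$ are adjacent in $G(V)$; hence $\mathcal{C}$ is a clique. By the symmetry rewrite above, $|\mathcal{C}| = \sum_{i=1}^{\lfloor n/2\rfloor}\binom{n}{i}_q$, which gives $w(G(V)) \ge \sum_{i=1}^{\lfloor n/2\rfloor}\binom{n}{i}_q$.

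For the upper bound the idea is to partition the whole vertex set into pairs of non-adjacent (trivially intersecting) subspaces; a clique meets each such pair in at most one vertex, forcing $w(G(V))$ to be at most half the number of vertices, which by the symmetry and the oddness of $n$ is precisely the target sum. To build the partition, for each $d$ with $1 \le d \le (n-1)/2$ I would form the bipartite graph $B_d$ whose parts are the $d$-dimensional and $(n-d)$-dimensional subspaces, joining $U$ to $W$ exactly when $U\cap W = 0$. By Lemma \ref{hh}, a fixed $d$-subspace has $q^{d(n-d)}\binom{n-d}{n-d}_q = q^{d(n-d)}$ complements of dimension $n-d$, and a fixed $(n-d)$-subspace has $q^{d(n-d)}\binom{d}{d}_q = q^{d(n-d)}$ complements of dimension $d$; thus $B_d$ is $q^{d(n-d)}$-regular. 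By Theorem \ref{g1} it has a perfect matching, pairing each $d$-subspace with a complement of dimension $n-d$ and conversely. Collecting these matchings over all such $d$ partitions every proper nontrivial subspace into trivially intersecting pairs, yielding $w(G(V)) \le \sum_{i=1}^{\lfloor n/2\rfloor}\binom{n}{i}_q$ and completing the proof.

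The main obstacle lies in the upper bound, and within it the one substantive point is verifying that $B_d$ is regular so that Hall's theorem applies; this reduces to the complement count of Lemma \ref{hh} returning the same value $q^{d(n-d)}$ from either side. The hypothesis that $n$ is odd is used exactly to guarantee $d \neq n-d$ throughout, so that the two parts of $B_d$ are disjoint and every vertex is matched exactly once, with no leftover middle dimension to spoil the pairing.
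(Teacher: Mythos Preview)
Your proof is correct and follows essentially the same route as the paper: both build, for each $d<n/2$, the bipartite graph on $d$- and $(n-d)$-subspaces with edges given by trivial intersection, observe via Lemma~\ref{hh} that it is $q^{d(n-d)}$-regular, and invoke Hall's theorem to obtain perfect matchings that pair every subspace with a non-adjacent one of complementary dimension. The only cosmetic difference is that the paper packages this partition as a proper vertex coloring (each matched pair a color class) and passes through $w(G(V))\le\chi(G(V))$, which simultaneously yields Corollary~\ref{4.2}, whereas you argue directly that a clique meets each non-adjacent pair at most once.
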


\begin{proof} First notice that if $W_1,W_2$ are two subspace
of $V$ with $dim(W_i)\geq \frac{n}{2}$ for $i=1,2$, then $W_1\cap
W_2\neq 0$. This means that $$w(G(V))\geq |\{W:W\leq V, dim(W)\geq
\frac{n}{2}\}|=\sum_{i=1}^{\lfloor\frac{n}{2}\rfloor} \bigg[
\begin{array}{c}
n \\
i
\end{array}
\bigg]_{q}.$$ Now we show that $w(G(V))\leq |\{W:W\leq V,
dim(W)\geq \frac{n}{2}\}|$. Let $A_t$ be the set of all
$t$-dimensional subspaces of $V$ with $1\leq t\leq n-1$. For
$1\leq t\leq \frac{n}{2}$, let $H_t$ be the induced bipartite
subgraph of $G(V)$ with partite sets $A_t$ and $A_{n-t}$. Let
$\overline{H} _t$ be the complement of $H_t$ in
$K_{|A_t|,|A_{n-t}|}$. By Lemma \ref{hh}, $\overline{H} _t$ is a
$k$-regular graph with $k=q^{t(n-t)}$. By Theorem \ref{g1},
$\overline{H} _t$ has a perfect matching $M_t$. Now we consider a
proper vertex coloring $f$ for $G(V)$ as follows. Let $W\leq V$
and assume that $n$ is odd. If $dim(W)<\frac{n}{2}$, then
$f(W)=W$, and if $dim(W)=n-t>\frac{n}{2}$, then $f(W)=f(W^*)$
where $dim(W^*)=t<\frac{n}{2}$ and $WW^*\in M_t$. We deduce that
$\chi(G(V))\leq |\{W:W\leq V, dim(W)\geq \frac{n}{2}\}|.$ By
Lemma \ref{g2}, $w(G(V))\leq \chi(G(V))\leq |\{W:W\leq V,
dim(W)\geq \frac{n}{2}\}|$. This completes the proof.
\end{proof}

\begin{corollary}\label{4.2}
If $n$ is odd, then
$\chi(G(V))=\sum_{i=1}^{\lfloor\frac{n}{2}\rfloor} \bigg[
\begin{array}{c}
n \\
i
\end{array}
\bigg]_{q}$.
\end{corollary}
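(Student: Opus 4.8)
The plan is to observe that the chromatic number has already been pinned down, from both sides, inside the proof of Theorem~\ref{gh}, so the corollary is an extraction rather than a fresh argument. Write $N=\sum_{i=1}^{\lfloor n/2\rfloor}\bigl[\begin{smallmatrix}n\\i\end{smallmatrix}\bigr]_q$, which by symmetry of the Gaussian coefficients equals the number of proper subspaces of dimension at least $n/2$. My goal is to establish the sandwich $N\le\chi(G(V))\le N$ and conclude equality. The lower bound will come from a general graph inequality; the upper bound from an explicit coloring.

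For the upper bound I would reconstruct the coloring $f$ used in Theorem~\ref{gh}. Using Lemma~\ref{hh}, the bipartite graph $H_t$ between $A_t$ and $A_{n-t}$ has complement $\overline{H}_t$ that is $q^{t(n-t)}$-regular, so by Theorem~\ref{g1} (Hall's theorem) $\overline{H}_t$ admits a perfect matching $M_t$ for each $1\le t<n/2$. I then color every subspace $W$ of dimension $<n/2$ by itself, and every subspace $W$ of dimension $n-t>n/2$ by the color of its $M_t$-partner $W^*$ of dimension $t$. The number of colors used is exactly $N$, so it remains to check $f$ is proper. The case of two small subspaces is automatic (distinct colors), and two large subspaces sharing a color would have to share a matching partner, forcing them equal. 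The only delicate case is a small $W$ and a large $W'$ sharing a color: this means $W=W'^{*}$, i.e. $WW'\in M_t\subseteq\overline{H}_t$, which by definition of the complement says $W\cap W'=0$, so $W$ and $W'$ are \emph{non}-adjacent in $G(V)$. Hence adjacent vertices always receive distinct colors and $\chi(G(V))\le N$.

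For the lower bound I would invoke Lemma~\ref{g2}, giving $\chi(G(V))\ge w(G(V))$, and then Theorem~\ref{gh}, which computes $w(G(V))=N$ for odd $n$. Combining the two inequalities yields $N\le\chi(G(V))\le N$, whence $\chi(G(V))=N$, as claimed.

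I expect no serious obstacle here, precisely because the substantive combinatorial content—regularity of $\overline{H}_t$ via Lemma~\ref{hh} and the clique lower bound via the "large subspaces pairwise intersect" observation—is already carried by Lemma~\ref{hh} and Theorem~\ref{gh}. The one point that genuinely needs care, and the part I would write out in full, is the properness verification in the mixed case: the coloring works only because the matching lives in the \emph{complement} $\overline{H}_t$, so color-sharing pairs are guaranteed to have trivial intersection and thus to be non-adjacent. Everything else is bookkeeping with the identity $\bigl[\begin{smallmatrix}n\\i\end{smallmatrix}\bigr]_q=\bigl[\begin{smallmatrix}n\\n-i\end{smallmatrix}\bigr]_q$.
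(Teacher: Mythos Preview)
Your proposal is correct and follows exactly the paper's approach: the corollary has no separate proof in the paper because the chain $N\le w(G(V))\le\chi(G(V))\le N$ is established verbatim inside the proof of Theorem~\ref{gh}, and you have simply made that extraction explicit. Your properness verification for the mixed small/large case is in fact more detailed than what the paper writes out, but the underlying idea (the matching lives in $\overline{H}_t$, so matched pairs have trivial intersection) is identical.
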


\begin{theorem}\label{4.3}
If $n$ is even then $$\sum_{i=1}^{\frac{n}{2}-1} \bigg[
\begin{array}{c}
n \\
i
\end{array}
\bigg]_{q}+\bigg[
\begin{array}{c}
n-1 \\
\frac{n-2}{2}
\end{array}
\bigg]_{q}\leq w(G(V))\leq \sum_{i=1}^{\frac{n}{2}-1} \bigg[
\begin{array}{c}
n \\
i
\end{array}
\bigg]_{q}+\bigg[
\begin{array}{c}
n \\
\frac{n}{2}
\end{array}
\bigg]_{q}-q^{\frac{n^2}{4}}-1.$$
\end{theorem}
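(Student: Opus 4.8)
The plan is to establish the two inequalities separately, following the strategy of Theorem \ref{gh} but treating the self-dual middle dimension $n/2$ with extra care. For the lower bound I would produce one explicit clique. Since $\dim(W_1\cap W_2)\ge \dim W_1+\dim W_2-n$, any two subspaces of dimension $>n/2$ meet nontrivially, so the family of all subspaces of dimension $>n/2$ is a clique; by the symmetry of the Gaussian coefficients its size is $\sum_{i=1}^{n/2-1}\bigg[\begin{array}{c}n\\i\end{array}\bigg]_q$. I then enlarge it inside dimension $n/2$ by fixing a line $\ell$ and adjoining every $n/2$-subspace containing $\ell$: such subspaces pairwise share $\ell$, and each of them meets every subspace of dimension $>n/2$ (again by the dimension inequality). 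Their number equals the number of $(n/2-1)$-subspaces of $V/\ell$, that is $\bigg[\begin{array}{c}n-1\\(n-2)/2\end{array}\bigg]_q$, and the union is a clique, giving the lower bound.

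For the upper bound I would bound $\omega(G(V))$ by $\chi(G(V))$ using Lemma \ref{g2} and build a proper coloring. On the complementary dimensions $t$ and $n-t$ with $t<n/2$ I repeat the scheme from the proof of Theorem \ref{gh}: each subspace of dimension $<n/2$ gets its own color, the graph $\overline{H}_t$ is $q^{t(n-t)}$-regular by Lemma \ref{hh}, so Theorem \ref{g1} gives a perfect matching $M_t$, and every subspace of dimension $n-t$ takes the color of the disjoint $t$-subspace matched to it. This uses exactly $\sum_{i=1}^{n/2-1}\bigg[\begin{array}{c}n\\i\end{array}\bigg]_q$ colors and is proper.

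Because a subspace of dimension $n/2$ meets every subspace of dimension $>n/2$, it cannot be placed in any of the low/high color classes above; dimension $n/2$ must be colored on its own, using fresh colors, by independent sets of $G(V)$, i.e. by families of pairwise disjoint $n/2$-subspaces (partial spreads). Thus the whole argument reduces to coloring the $\bigg[\begin{array}{c}n\\n/2\end{array}\bigg]_q$ middle subspaces with at most $\bigg[\begin{array}{c}n\\n/2\end{array}\bigg]_q-q^{n^2/4}-1$ colors, i.e. to saving $q^{n^2/4}+1$ colors over the trivial one-color-per-subspace coloring.

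This middle coloring is the main obstacle. By Lemma \ref{hh} with $m=t=n/2$, a fixed $n/2$-subspace $W_0$ has exactly $q^{n^2/4}$ subspaces disjoint from it, and these are precisely its complements; identifying the complements with $\mathrm{Hom}(V/W_0,W_0)\cong M_{n/2}(F_q)$, two of them are disjoint exactly when their difference is invertible, so the cosets of an embedded field $F_{q^{n/2}}\hookrightarrow M_{n/2}(F_q)$ form partial spreads of size $q^{n/2}$. Such coset partitions are the natural source of the many large partial spreads one needs, since a naive pairing into disjoint pairs cannot work: $q^{n^2/4}$ may exceed half of $\bigg[\begin{array}{c}n\\n/2\end{array}\bigg]_q$, so the saving must come from partial spreads of size up to $q^{n/2}+1$ rather than from a matching. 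The delicate part, which I expect to absorb most of the effort, is organizing these partial spreads so that the number of colors used lands exactly at $\bigg[\begin{array}{c}n\\n/2\end{array}\bigg]_q-q^{n^2/4}-1$. Once such a coloring exists, $\omega(G(V))\le\chi(G(V))$ yields the claimed upper bound.
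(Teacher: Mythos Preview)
Your lower bound and the overall scaffolding of the upper bound (split off the middle layer $A_{n/2}$, color the remaining dimensions via the perfect-matching argument of Theorem \ref{gh}, and then bound $\omega$ by $\chi$ through Lemma \ref{g2}) match the paper exactly.

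The gap is in the middle layer. You set out to \emph{construct} a proper coloring of $G_1=G[A_{n/2}]$ with $\bigg[\begin{array}{c}n\\ n/2\end{array}\bigg]_q-q^{n^2/4}-1$ colors by assembling large partial spreads coming from cosets of a field embedding $F_{q^{n/2}}\hookrightarrow M_{n/2}(F_q)$, and you acknowledge that organizing these spreads to hit the exact count is ``the delicate part'' you have not carried out. The paper avoids this entirely: $G_1$ is regular of degree
\[
\Delta(G_1)=\bigg[\begin{array}{c}n\\ n/2\end{array}\bigg]_q-q^{n^2/4}-1
\]
(by Lemma \ref{hh} with $m=t=n/2$), it is connected, and it is neither a complete graph nor an odd cycle; hence Brooks' Theorem (Theorem \ref{g3}) immediately gives $\chi(G_1)\le\Delta(G_1)$. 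Adding this to $\chi(G_2)=\sum_{i=1}^{n/2-1}\bigg[\begin{array}{c}n\\ i\end{array}\bigg]_q$ yields the stated upper bound in one line. So the missing idea is simply Brooks' theorem applied to the regular graph on the self-dual layer; your spread construction, while plausible, is neither needed nor completed.
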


\begin{proof} First notice that if $W_1,W_2$ are two subspace
of $V$ with $dim(W_i)> \frac{n}{2}$ for $i=1,2$, then $W_1\cap
W_2\neq 0$. Also if $W_1,W_2$ are two subspace of $V$ with
$dim(W_1)= \frac{n}{2}$ and $dim(W_2)>\frac{n}{2}$, then $W_1\cap
W_2\neq 0$. But for a $1$-dimensional subspace $V_1$ of $V$, any
subspace of $\frac{V}{V_1}$ of dimension $\frac{n-2}{2}$ is in
the form $\frac{W}{V_1}$, where $dim(W)=\frac{n}{2}$. So there
are at least $\bigg[
\begin{array}{c}
n-1 \\
\frac{n-2}{2}
\end{array}
\bigg]_{q}$ subspaces of dimension $\frac{n}{2}$ which contain
$V_1$. We deduce that

\begin{eqnarray*}
w(G(V)) & \geq & |\{W:W\leq V, dim(W)> \frac{n}{2}\}|+\bigg[
\begin{array}{c}
n-1 \\
\frac{n-2}{2}
\end{array}
\bigg]_{q}\\
 & \geq & \sum_{i=1}^{\frac{n}{2}-1} \bigg[
\begin{array}{c}
n \\
i
\end{array}
\bigg]_{q}+\bigg[
\begin{array}{c}
n-1 \\
\frac{n-2}{2}
\end{array}
\bigg]_{q}.
\end{eqnarray*}

Let $A_t$ be the set of all $t$-dimensional subspaces of $V$ with
$1 \leq t\leq n-1$. Let $G_1=G[A_{\frac{n}{2}}]$ and $G_2=G-G_1$.
As in the proofs of Theorem \ref{gh} and Corollary \ref{4.2}, we
obtain that $G_2$ is $k$-regular with $k=q^{\frac{n}{2}
(n-\frac{n}{2})}=q^{\frac{n^2}{4}}$ and
$\chi(G_2)=\sum_{i=1}^{\frac{n}{2}-1} \bigg[
\begin{array}{c}
n \\
i
\end{array}
\bigg]_{q}$. Since $G_1$ is not a complete graph or an odd cycle,
by Theorems \ref{j1} and \ref{g3}, $\chi(G_1)\leq
\Delta(G_1)=\bigg[
\begin{array}{c}
n \\
\frac{n}{2}
\end{array}
\bigg]_{q}-q^{\frac{n^2}{4}}-1.$ So $\chi(G)\leq
\sum_{i=1}^{\frac{n}{2}-1} \bigg[
\begin{array}{c}
n \\
i
\end{array}
\bigg]_{q}+\bigg[
\begin{array}{c}
n \\
\frac{n}{2}
\end{array}
\bigg]_{q}-q^{\frac{n^2}{4}}-1$. Now the results follows by Lemma
\ref{g2}.
\end{proof}

\begin{corollary}
If $n$ is even then $$\sum_{i=1}^{\frac{n}{2}-1} \bigg[
\begin{array}{c}
n \\
i
\end{array}
\bigg]_{q}+\bigg[
\begin{array}{c}
n-1 \\
\frac{n-2}{2}
\end{array}
\bigg]_{q}\leq \chi(G(V))\leq \sum_{i=1}^{\frac{n}{2}-1} \bigg[
\begin{array}{c}
n \\
i
\end{array}
\bigg]_{q}+\bigg[
\begin{array}{c}
n \\
\frac{n}{2}
\end{array}
\bigg]_{q}-q^{\frac{n^2}{4}}-1.$$
\end{corollary}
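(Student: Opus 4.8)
The plan is to observe that both inequalities are already contained, essentially verbatim, in the proof of Theorem \ref{4.3}; the corollary merely records them for $\chi(G(V))$ in place of $w(G(V))$. Write $L$ and $U$ for the left- and right-hand sides of the displayed inequality, and treat the two bounds separately.

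For the upper bound I would point to the proper colouring constructed inside the proof of Theorem \ref{4.3}. There one partitions the vertices of $G(V)$ into $A_{n/2}$ and the remaining subspaces, sets $G_1=G[A_{n/2}]$ and $G_2=G-G_1$, shows that $\chi(G_2)$ equals the number of subspaces of dimension less than $\frac{n}{2}$ by the perfect-matching colouring of Theorem \ref{gh}, and bounds $\chi(G_1)\le\Delta(G_1)$ by Brooks' theorem (Theorem \ref{g3}), using that $G_1$ is connected but neither complete nor an odd cycle. Colouring these two vertex-disjoint induced subgraphs with disjoint palettes yields a proper colouring of $G(V)$ using $\chi(G_1)+\chi(G_2)$ colours, and this total is exactly $U$. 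Hence $\chi(G(V))\le U$.

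For the lower bound I would apply Lemma \ref{g2}, which gives $\chi(G(V))\ge w(G(V))$ for every graph, and then invoke the lower bound $w(G(V))\ge L$ established in Theorem \ref{4.3}. Concatenating these yields $L\le w(G(V))\le\chi(G(V))$, so the clique lower bound transfers intact to the chromatic number.

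I expect no genuine obstacle, since every substantive ingredient --- the matching-based colouring, the Brooks estimate on $\Delta(G_1)$, and the clique construction behind the lower bound --- was already carried out in Theorem \ref{4.3}. The only subtlety is to notice that the upper-bound half of that theorem was proved by producing an explicit proper colouring, so it bounds $\chi$ rather than merely $w$, and that Lemma \ref{g2} is exactly what lets the shared lower bound pass from $w$ to $\chi$. Together the chains $w(G(V))\le\chi(G(V))\le U$ and $L\le w(G(V))\le\chi(G(V))$ confine $\chi(G(V))$ between the same two quantities that bound $w(G(V))$.
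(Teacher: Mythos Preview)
Your proposal is correct and is precisely the argument implicit in the paper: the corollary has no separate proof there because the upper bound $\chi(G(V))\le U$ is established verbatim inside the proof of Theorem \ref{4.3}, and the lower bound follows from $L\le w(G(V))$ together with Lemma \ref{g2}. Your write-up makes this explicit, with no deviation in method.
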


\begin{theorem}
$\gamma(G(V))=q+1$.
\end{theorem}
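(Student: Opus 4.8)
The plan is to prove the two bounds $\gamma(G(V)) \le q+1$ and $\gamma(G(V)) \ge q+1$ separately, and in both directions I would lean on a single structural observation: a $1$-dimensional subspace (a line) $L$ can be dominated only by a subspace that \emph{contains} it. Indeed $L$ is adjacent to $W$ iff $L \cap W \ne 0$ iff $L \le W$, and if $L$ lies in the dominating set then trivially $L \le L$. Hence a family $S$ of proper nontrivial subspaces dominates every line if and only if $\bigcup_{W \in S} W = V$, i.e. $S$ covers $V$ by proper subspaces. This reduces the problem to the classical fact that a vector space over $F$ cannot be covered by fewer than $q+1$ proper subspaces.

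For the upper bound I would exhibit an explicit dominating set of size $q+1$. Assume first $n \ge 3$. Fix $\Pi \le V$ with $\dim \Pi = n-2$ and let $S$ be the pencil of all hyperplanes $H$ with $\Pi \le H$; since these correspond to the $1$-dimensional subspaces of the $2$-dimensional quotient $V/\Pi$, we have $|S| = q+1$. I claim $S$ dominates $G(V)$. Any $W$ with $\dim W \ge 2$ meets each $H \in S$ nontrivially because $\dim(W \cap H) \ge \dim W - 1 \ge 1$, so $W$ is dominated; and a line $L$ is handled by the two cases $L \le \Pi$ (then $L$ lies in every $H \in S$) and $L \not\le \Pi$ (then $L + \Pi$ is itself a hyperplane of the pencil containing $L$). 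Thus $\gamma(G(V)) \le q+1$. The case $n=2$ is immediate: $G(V)$ is then edgeless on its $q+1$ vertices, so $\gamma(G(V)) = q+1$ directly.

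For the lower bound, let $S$ be any dominating set. By the observation above every line of $V$ lies in some member of $S$, so the proper subspaces in $S$ cover $V$. I would then invoke (or reprove in one line) the covering lemma: if $V = \bigcup_{i=1}^{k} W_i$ with each $W_i$ proper, then $k \ge q+1$. The short proof assumes an irredundant cover with $k \le q$, picks $v \in W_1 \setminus \bigcup_{i \ge 2} W_i$ and $u \not\in W_1$, notes that the $q$ vectors $u + tv$ with $t \in F$ all avoid $W_1$, and by pigeonhole finds two of them in a common $W_j$ with $j \ge 2$, forcing $v \in W_j$, a contradiction. Hence $|S| \ge q+1$, and combining the bounds yields $\gamma(G(V)) = q+1$.

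The main obstacle is spotting the reduction in the first paragraph: once one realizes that lines can be dominated only by containment, the lower bound collapses to the classical subspace-covering theorem and the upper bound becomes a routine verification that a hyperplane pencil simultaneously covers $V$ and dominates all subspaces of dimension at least $2$. The only calculations needing care are the dimension estimate $\dim(W \cap H) \ge \dim W - 1$ and the separate, degenerate treatment of $n = 2$.
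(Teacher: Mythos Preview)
Your proof is correct and follows essentially the same route as the paper's: both exhibit the pencil of hyperplanes through a fixed codimension-$2$ subspace as a dominating set of size $q+1$, and both reduce the lower bound to the observation that a dominating set must cover $V$ by proper subspaces (since a line $\langle x\rangle$ is dominated only by subspaces containing $x$). The only real difference is in how the covering lower bound is finished: the paper uses the one-line cardinality count $q^{n} = \bigl|\bigcup_i V_i\bigr| < \sum_i |V_i| \le t\,q^{n-1}$, which gives $t>q$ immediately, whereas you run the classical line-pigeonhole argument; the paper's count is shorter, while your version has the mild advantage of not appealing to the finiteness of $V$ beyond $|F|=q$. Your separate verification that subspaces of dimension $\ge 2$ meet every hyperplane is more work than needed---once the hyperplanes cover $V$, any nonzero vector of $W$ already lies in some $H$, so $W\cap H\ne 0$ automatically---but it does no harm.
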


\begin{proof} Let $W$ be a subspace of $V$ with
$dim(W)=n-2$. It follows that $\frac{V}{W}$ has $q+1$ subspaces
$\frac{W_1}{W},\frac{W_2}{W},...,\frac{W_{q+1}}{W}$ with
$dim(\frac{W_i}{W})=1$ for $i=1,2,...,q+1$. It is obvious that
$$\frac{V}{W}=\frac{W_1}{W}\cup\frac{W_2}{W}\cup...\cup\frac{W_{q+1}}{W}.$$
Now we can see that $$V=W_1\cup W_2\cup...\cup W_{q+1}.$$ This
means that $\{W_1,W_2,...,W_{q+1}\}$ is a dominating set for
$G(V)$ and so $\gamma(G(V))\leq q+1$. On the other hand suppose
that $S=\{V_1,V_2,...,V_t\}$ is a minimum dominating set for
$G(V)$. If there is a vector $x\not \in (V_1\cup V_2\cup ...\cup
V_t)$, then $\langle x \rangle$ is not dominated by $S$. This
contradiction implies that $V_1\cup V_2\cup ...\cup V_t=V$. Then
$$q^n=|\cup _{i=1}^{t} V_i| < \sum_{i=1}^{t}|V_i| \leq
\sum_{i=1}^{t}q^{n-1}  = tq^{n-1}.$$

We deduce that $t\geq q+1$, and so $\gamma(G(V))\geq q+1$. This
completes the proof.
\end{proof}

\begin{theorem}
$\alpha(G(V))=\frac{q^n -1}{q-1}$.
\end{theorem}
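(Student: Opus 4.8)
The plan is to recognize the target quantity as a count of subspaces of a fixed small dimension and then prove matching lower and upper bounds. Specifically, note that
$$\frac{q^n-1}{q-1}=\bigg[\begin{array}{c} n \\ 1 \end{array}\bigg]_{q}$$
is exactly the number of one-dimensional subspaces of $V$. So the theorem asserts that the largest family of pairwise trivially-intersecting proper nontrivial subspaces has the same size as the set of all lines (one-dimensional subspaces).

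For the lower bound I would exhibit an explicit independent set, namely the collection of all one-dimensional subspaces of $V$. First I would observe that two distinct one-dimensional subspaces $L$ and $L'$ satisfy $L\cap L'=0$: a nonzero vector lying in both would span each of them and force $L=L'$. Hence these lines are pairwise nonadjacent in $G(V)$, giving an independent set of size $\frac{q^n-1}{q-1}$ and therefore $\alpha(G(V))\geq \frac{q^n-1}{q-1}$.

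For the upper bound I would take an arbitrary independent set $S=\{W_1,W_2,\dots,W_t\}$, so that $W_i\cap W_j=0$ whenever $i\neq j$. The key observation is that the families of one-dimensional subspaces contained in the distinct $W_i$ are pairwise disjoint: if a one-dimensional subspace $L$ were contained in both $W_i$ and $W_j$ with $i\neq j$, then $L\subseteq W_i\cap W_j=0$, contradicting $\dim(L)=1$. Since each $W_i$ is a nontrivial subspace it contains at least one one-dimensional subspace, so choosing one such line from each $W_i$ produces $t$ distinct one-dimensional subspaces of $V$. Consequently $t\leq \frac{q^n-1}{q-1}$, and maximizing over all independent sets gives $\alpha(G(V))\leq \frac{q^n-1}{q-1}$.

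Combining the two bounds yields the claimed equality. I expect no serious obstacle: the argument is entirely structural rather than computational, and the only step needing care is the disjointness claim in the upper bound, which relies precisely on the independence condition $W_i\cap W_j=0$ to guarantee that a line cannot be shared by two members of $S$.
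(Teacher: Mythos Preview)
Your argument is correct. The paper actually states this theorem without proof, so there is nothing to compare against; your write-up would serve as a complete proof of the result.

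The lower bound via the set of all one-dimensional subspaces is exactly the natural one, and your upper bound is clean: from any independent family $\{W_1,\dots,W_t\}$ you extract one line $L_i\subseteq W_i$, and the independence condition $W_i\cap W_j=0$ forces these lines to be pairwise distinct, whence $t\leq \big[\begin{smallmatrix}n\\1\end{smallmatrix}\big]_q$. The only implicit hypothesis is $n\geq 2$ (for $n=1$ the graph is empty while $(q-1)/(q-1)=1$), but the paper already notes that the vertex set is empty when $\dim(V)=1$, so this is understood throughout.
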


\begin{problem}
What is the exact value of $w(G(V))$ for even $n$?
\end{problem}

\end{document}